\theoremstyle{plain}   
\newtheorem{theorem}{Theorem}[section]   
\newtheorem{corollary}[theorem]{Corollary}     
\newtheorem{lemma}[theorem]{Lemma}         
\newtheorem{proposition}[theorem]{Proposition}  
\theoremstyle{definition}
\newtheorem{definition}[theorem]{Definition}   
\theoremstyle{remark}
\newtheorem{remark}[theorem]{Remark}        
\numberwithin{equation}{section}
\newcommand{\ep}{\varepsilon}
\newcommand{\R}{{\mathbb R}}
\newcommand{\N}{{\mathbb N}}
\newcommand{\vf}{\varphi}
\newcommand{\mcA}{{\mathcal{A}}}
\begin{document}

\title{Hadamard differentiability via G\^ ateaux differentiability}

\thanks{The research was supported by the grant GA\v CR P201/12/0436.}

\author{Lud\v ek Zaj\'\i\v{c}ek}

\subjclass[2010]{Primary: 46G05; Secondary: 26B05, 49J50.}

\keywords{Hadamard differentiability, G\^ ateaux differentiability, Fr\' echet differentiability, $\sigma$-porous set,
  $\sigma$-directionally porous set, Stepanoff theorem, Aronszajn null set}

\email{zajicek@karlin.mff.cuni.cz}

\address{Charles University,
Faculty of Mathematics and Physics,
Sokolovsk\'a 83,
186 75 Praha 8-Karl\'\i n,
Czech Republic}


\begin{abstract} 

Let $X$ be a separable Banach space, $Y$ a Banach space and $f: X \to Y$ a mapping. 
 We prove that there exists a  $\sigma$-directionally porous set $A\subset X$ such that if $x\in X \setminus A$,  $f$ is Lipschitz at $x$, and $f$ is  G\^ateaux  differentiable at $x$, then $f$ is Hadamard differentiable at $x$.
 If $f$ is Borel  measurable (or has the Baire property) and  is G\^ ateaux differentiable at all points, then $f$ is Hadamard differentiable at all points
  except a  set which is $\sigma$-directionally porous set (and so is Aronszajn null, Haar null and $\Gamma$-null).
   Consequently, an everywhere G\^ ateaux differentiable $f: \R^n \to Y$ is Fr\' echet differentiable except a
    nowhere dense $\sigma$-porous set.
    \end{abstract}
   
\markboth{L.~Zaj\'{\i}\v{c}ek}{Hadamard differentiability via G\^ateaux differentiability}

\maketitle

\section{Introduction}
 Hadamard derivative of a mapping $f:X \to Y$ between Banach spaces, which is stronger than G\^ ateaux derivative but weaker that Fr\' echet derivative, was applied many times in the literature. For three formally different but equivalent definitions of Hadamard derivative see
  Lemma \ref{eh} below. If $X$ is finite-dimensional, then Hadamard derivative coincides with Fr\' echet derivative, but
   for infinite-dimensional spaces Fr\' echet derivative is ``much stronger'', even for Lipschitz $f$.
   
  On the other hand, if $f$ is Lipschitz, then Hadamard derivative coincides with G\^ ateaux derivative; in this sense
   these two types of derivatives are rather close. Using this fact,  we obtain the well-known result that if $f$ is everywhere G\^ ateaux differentiable and has the Baire property, than $f$ is  Hadamard differentiable at all points except a nowhere dense set.
            Indeed, \cite[Corollary 3.9]{Shk} easily implies that, under above assumptions, 
      $f$ is locally Lipschitz on a dense open set $U$.  However, an easy example (see Remark \ref{neli}) shows   that in general (even if $X=Y=\R$)   we cannot find $U$ with 
    ``measure   null'' complement.

      We prove  (Theorem \ref{vsha}) that if $X$ is separable and $f$ has the Baire property and is everywhere G\^ ateaux differentiable, then $f$ is Hadamard differentiable at all points except a $\sigma$-directionally porous set. This is an interesting additional information, since each  $\sigma$-directionally porous subset of a
 separable Banach spaces $X$ is ``measure null'': it is Aronszajn (=Gauss) null, and so also Haar null, (see \cite[p. 164 and  Chap. 6]{BL}) and
  also $\Gamma$-null (see \cite[Remark 5.2.4] {LPT}). 
  
  As an easy consequence of Theorem \ref{vsha},  we obtain  (see Theorem \ref{kondim}) that an everywhere G\^ ateaux differentiable $f: \R^n \to Y$ is Fr\' echet differentiable except a
    $\sigma$-porous set.
  
  The main ingredient of the proof of Theorem \ref{vsha} is Theorem \ref{lgaha}. It asserts that if $X$ is separable and 
 $f:X \to Y$, then  there exists a  $\sigma$-directionally porous set $A\subset X$ 
  such that if $x\in X \setminus A$,  $f$ is Lipschitz at $x$, and $f$ is  G\^ateaux  differentiable at $x$, then $f$ is Hadamard differentiable at $x$.

  Lemma  \ref{dkuh}  implies that if $X$ is separable and $f$  has the Baire property and  is everywhere
 G\^ ateaux differentiable, then $f$ is Lipschitz at all points except a $\sigma$-directionally porous set. So,
  Theorem \ref{lgaha} together with  Lemma  \ref{dkuh} imply Theorem \ref{vsha}.
  
  Further, Theorem \ref{lgaha} shows that the infinite-dimensional version of Stepanoff theorem on G\^ ateaux differentiability from \cite{Du} holds also for Hadamard differentiability (see Corollary \ref{step}).

 \section{Preliminaries} 
 
 In the following, by a Banach space we mean a real Banach space. If $X$ is a Banach space, we set
    $S_X:= \{x \in X: \|x\|=1\}$. The symbol $B(x,r)$ will denote the open ball with center $x$ and radius $r$.

In a metric space $(X,\rho)$, the system of all sets with the Baire property is the smallest $\sigma$-algebra containing all open sets and all first category sets. We will say that a {\it mapping $f: (X, \rho_1) \to (Y, \rho_2)$ has the Baire property} if $f$ is measurable with respect to the $\sigma$-algebra of all sets with the Baire property.
 In other words, $f$ has the Baire property, if and only if $f^{-1}(B)$ has the Baire property for all Borel sets $B \subset Y$ (see \cite[Section 32]{Ku}).

    Let $X$, $Y$ be Banach spaces, $G \subset X$ an open set, and $f:G \to Y$ a mapping. 
    
   We say  that {\it $f$ is Lipschitz at $x \in G$} if  $\limsup_{y \to x} \frac{\|f(y)-f(x)\|}{\|y-x\|} < \infty$.
   We say  that $f$ is {\it pointwise Lipschitz} if $f$ is Lipschitz at all points of $G$.

   The directional and one-sided  directional derivatives 
    of $f$ at $x\in G$ in the direction $v\in X$ are defined respectively by
   $$f'(x,v) := \lim_{t \to 0} \frac{f(x+tv)-f(x)}{t}\ \ \text{and}\ \  f'_+(x,v) := \lim_{t \to 0+} \frac{f(x+tv)-f(x)}{t}.$$
    The {\it Hadamard directional and one-sided  directional derivatives} 
    of $f$ at $x\in G$ in the direction $v\in X$ are defined respectively by
   $$f'_H(x,v) := \lim_{z \to v, t \to 0} \frac{f(x+tv)-f(x)}{t}\ \ \text{and}\ \  f'_{H+}(x,v) := \lim_{z \to v, t \to 0+} \frac{f(x+tv)-f(x)}{t}.$$

   It is easy to see that $f'(x,v)$ (resp. $f'_H(x,v)$) exists if and only if  $f'_+(x,-v) = -f'_+(x,v)$
    (resp. $f'_{H+}(x,-v) = - f'_{H+}(x,v)$). 
    
    It is well-known and easy to prove that, if $f$ is locally Lipschitz on $G$, then  $f'(x,v)=f'_H(x,v)$ (resp. $f'_+(x,v)=f'_{H+}(x,v)$) whenever one of these two derivatives exists.
 
 The usual modern definition of the Hadamard derivative is the following:
 
 A continuous linear operator $L: X \to Y$ is said to be a {\it Hadamard derivative} of $f$ at a point $x \in X$ if
  $$ \lim_{t \to 0} \frac{f(x+tv)-f(x)}{t} = L(v)\ \ \  \text{for each}\ \ \ v \in X$$
   and the limit is uniform with respect to $v \in C$, whenever $C\subset X$ is a compact set.
    In this case we set $f'_H(x) := L$. 
   
   The following fact is well-known (see \cite{Sha}):
   
   \begin{lemma}\label{eh}
   Let $X$, $Y$ be Banach spaces, $\emptyset \neq G \subset X$  an open set, $x \in G$, $f: G \to Y$ a mapping and
    $L: X \to Y$ a continuous linear operator. Then the following conditions are equivalent:
    \begin{enumerate}
    \item
    $f'_H(x) := L$,
    \item
    $f'_H(x,v)= L(v)$ for each $v \in X$,
    \item
    if $\vf: [0,1] \to X$ is such that $\vf(0)=x$ and $\vf_+'(0)$ exists, then $(f \circ \vf)_+'(0)= L(\vf_+'(0))$.
    \end{enumerate}
    \end{lemma}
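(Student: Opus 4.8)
The plan is to prove the cycle of implications (i) $\Rightarrow$ (ii) $\Rightarrow$ (iii) $\Rightarrow$ (i). Throughout one uses that $L$ is assumed to be a continuous linear operator, so in each implication it suffices to check the relevant limit conditions. In particular, condition (i) is equivalent to the single assertion that for every compact $C \subset X$ and every $\varepsilon > 0$ there is $\delta > 0$ with $\bigl\| t^{-1}(f(x+tv) - f(x)) - L(v) \bigr\| \le \varepsilon$ whenever $v \in C$ and $0 < |t| < \delta$ (the pointwise limit in the ``modern definition'' being the case $C = \{v\}$). All difference quotients written below are well defined for small values of the parameters because $G$ is open and the arguments tend to $x$. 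The soft observation that drives two of the three implications is that $f'_H(x,v)$ is a limit, as $(z,t) \to (v,0)$, of a map defined on a subset of the metrizable space $X \times (\R \setminus \{0\})$, and hence may be tested along sequences $(z_n,t_n) \to (v,0)$, whose $z$-traces $\{v\} \cup \{z_n : n \in \N\}$ are compact.

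For (i) $\Rightarrow$ (ii) I would fix $v$ and argue by contradiction: if $f'_H(x,v) \neq L(v)$, choose $\varepsilon > 0$ and $z_n \to v$, $t_n \to 0$, $t_n \neq 0$, with $\bigl\| t_n^{-1}(f(x+t_n z_n) - f(x)) - L(v) \bigr\| \ge \varepsilon$, apply condition (i) on the compact set $C := \{v\} \cup \{z_n : n \in \N\}$, and conclude with $\| L(z_n) - L(v) \| \le \| L \| \, \| z_n - v \| \to 0$. For (ii) $\Rightarrow$ (iii), given $\varphi : [0,1] \to X$ with $\varphi(0) = x$ and $v := \varphi_+'(0)$ existing, set $z(t) := t^{-1}(\varphi(t) - x)$ for $t > 0$; then $z(t) \to v$, so $(z(t),t) \to (v,0)$, and therefore $t^{-1}(f(\varphi(t)) - f(x)) = t^{-1}(f(x + t z(t)) - f(x)) \to f'_H(x,v) = L(v)$, i.e. $(f \circ \varphi)_+'(0) = L(\varphi_+'(0))$.

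The implication (iii) $\Rightarrow$ (i) is where the real work lies, and I expect it to be the main obstacle. Again arguing by contradiction, suppose a compact $C$, an $\varepsilon > 0$, and sequences $v_n \in C$, $t_n \to 0$, $t_n \neq 0$ violate the uniform statement above. Passing to subsequences, one may assume $v_n \to v \in C$ and that all $t_n$ have the same sign with $|t_n|$ strictly decreasing; treat the case $t_n > 0$, the case $t_n < 0$ being entirely symmetric (use instead the path built from the points $x - |t_n| v_n$). The crux is to construct a \emph{single} path $\varphi$ passing through all the ``bad'' points $x + t_n v_n$ yet still differentiable at $0$: take $\varphi$ piecewise linear with $\varphi(0) = x$, $\varphi(t_n) = x + t_n v_n$, and linear interpolation on each segment $[t_{n+1}, t_n]$, restricted to a sufficiently small initial interval (and, if one insists on the domain $[0,1]$, linearly reparametrized, which merely scales both $\varphi_+'(0)$ and $(f \circ \varphi)_+'(0)$ by the same positive factor). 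For $t \in [t_{n+1}, t_n]$ the slope $t^{-1}(\varphi(t) - x)$ is a convex combination of $v_n$ and $v_{n+1}$, hence tends to $v$, so $\varphi_+'(0) = v$. Condition (iii) then yields $(f \circ \varphi)_+'(0) = L(v)$, and restricting this one-sided limit to the subsequence $t = t_n$ gives $t_n^{-1}(f(x+t_n v_n) - f(x)) \to L(v)$; since $L(v_n) \to L(v)$ by continuity of $L$, this contradicts the lower bound $\varepsilon$. The genuinely delicate points here are the design of the interpolating path and the verification that its slope stays trapped between consecutive $v_n$; the sign dichotomy and the reparametrization onto $[0,1]$ are routine bookkeeping.
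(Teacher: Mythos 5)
Your proof is correct. Note, however, that the paper itself gives no argument for this lemma: it is stated as a known fact with a reference to Shapiro's paper \cite{Sha}, so there is no internal proof to compare against. Your cyclic scheme (i)$\Rightarrow$(ii)$\Rightarrow$(iii)$\Rightarrow$(i) is the standard route, and the one genuinely nontrivial step --- (iii)$\Rightarrow$(i) via the piecewise-linear path through the bad points $x+t_nv_n$ --- is carried out correctly: on $[t_{n+1},t_n]$ one has $t^{-1}(\vf(t)-x)=\frac{(1-\lambda)t_{n+1}}{t}v_{n+1}+\frac{\lambda t_n}{t}v_n$ with nonnegative coefficients summing to $1$, so the slope is indeed trapped in the convex hull of $\{v_n,v_{n+1}\}$ and $\vf_+'(0)=v$, after which (iii) and continuity of $L$ give the contradiction. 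Your reduction of (i) to the single uniform-over-compacts statement (pointwise convergence being the case of a singleton $C$), the sequential characterization of the limit defining $f'_H(x,v)$, the sign dichotomy handled via the path through $x-|t_n|v_n$, and the reparametrization onto $[0,1]$ are all sound; the only points worth making explicit in a written-up version are the minor well-definedness remarks you already flag (the arguments $x+t_nz_n$ and $\vf(t)$ lie in $G$ for small parameters, so all difference quotients make sense).
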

    
    Recall (see, e.g. \cite{Sha}), that if  $f$ is Hadamard differentiable at $a \in G$, then $f$ is G\^ ateaux
     differentiable at $a$, and 
    \begin{equation}\label{lipgha}
    \text{if $f$ is locally Lipschitz on $G$, then also the opposite implication holds.}
     \end{equation}
     Further, 
    \begin{equation}\label{konhf}
    \text{if $X= \R^n$, then Hadamard differentiability is equivalent to Fr\' echet differentiability.}
     \end{equation}
    
    \begin{definition}
    Let $X$ be a Banach space. We say that $A \subset X$ is {\it directionally porous at a point $x \in X$}, if there exist
     $0 \neq v \in X$, $p>0$  and a sequence  $t_n \to 0$ of positive real numbers such that $B(x+  t_n v, p t_n) \cap A = \emptyset$. (In this case we say that $A$ is  {\it porous at $x$ in  direction $v$}.)
      
      We say that $A \subset X$ is {\it directionally porous} if $A$ is directionally porous at each point $x \in A$. 
      
      We say that $A \subset X$ is {\it $\sigma$-directionally porous} if it is a countable union of directionally porous
       sets.
       \end{definition}
       
       Recall that directional porosity is stronger than (upper) porosity, but 
      \begin{equation}\label{pdp}       
       \text{ if $X$ is finite-dimensional, then these two notions are equivalent.}
       \end{equation}

       Let $X$ be a Banach space, $x \in X$, $v \in S_X$ and $\delta >0$. Then we define the open cone
     $C(x,v,\delta)$ as the set of all $y \neq x$ for which  $\|v - \frac{y-x}{\|y-x\|}\| < \delta$.

The following easy inequality is well known (see e.g.\ \cite[Lemma~5.1]{MS}):
\begin{equation}\label{triangle}
\text{if }u,v\in X\setminus\{0\},\text{ then }
 \left\|\frac{u}{\|u\|}-\frac{v}{\|v\|}\right\|
\leq \frac{2}{\|u\|}\, \|u-v\|.
\end{equation}

Because of the lack of a reference we supply the proof of the following easy fact.
\begin{lemma}\label{hnli}
 Let $X$ be a Banach space, $Y$ a Banach space, $G \subset X$ an open set, $a \in G$, and $f:G \to Y$ a mapping.
  Then the following are equivalent.
  \begin{enumerate}
  \item
    $f'_{H+}(a,0)$ exists,
    \item
    $f'_{H}(a,0)$ exists,
    \item
    $f'_{H}(a,0)=0$,
    \item
    $f$ is Lipschitz at $a$.
    \end{enumerate}
\end{lemma}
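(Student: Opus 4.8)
The plan is to prove the cycle of implications (iv) $\Rightarrow$ (iii) $\Rightarrow$ (ii) $\Rightarrow$ (i) $\Rightarrow$ (iv), with the bulk of the work being the last step. The implications (iii) $\Rightarrow$ (ii) and (ii) $\Rightarrow$ (i) are immediate from the definitions, since the existence of $f'_H(a,0)$ (whatever its value) trivially gives the existence of the one-sided version $f'_{H+}(a,0)$, and condition (iii) is just a special case of (ii). For (iv) $\Rightarrow$ (iii): if $f$ is Lipschitz at $a$ with constant $K$ on a neighbourhood of $a$, then for $z$ near $0$ and $t \neq 0$ small we have $\|f(a+tz) - f(a)\|/|t| \le K\|z\|$, which tends to $0$ as $(z,t) \to (0,0)$; this shows $f'_H(a,0)$ exists and equals $0$.

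The substance is (i) $\Rightarrow$ (iv). Assume $f'_{H+}(a,0)$ exists; call its value $w \in Y$. First I would argue $w = 0$: taking $z = 0$ fixed and letting $t \to 0+$ forces the difference quotient $(f(a) - f(a))/t = 0$ along that net, so $w = 0$. Now unwind the definition of the Hadamard one-sided directional derivative at the direction $0$: for every $\ep > 0$ there exist $\delta > 0$ and $\tau > 0$ such that
\[
\left\| \frac{f(a + tz) - f(a)}{t} \right\| < \ep
\qquad \text{whenever } \|z\| < \delta \text{ and } 0 < t < \tau .
\]
Fix $\ep = 1$, obtaining such $\delta, \tau$. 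The goal is to bound $\|f(y) - f(a)\|/\|y - a\|$ for $y$ close to $a$. Given $y$ with $0 < \|y - a\| < r$ for a suitably small $r$ (to be chosen in terms of $\delta$ and $\tau$), write $y - a = tz$ where $t := \|y-a\|^{1/2}$ (say) and $z := (y-a)/t$, so that $\|z\| = \|y-a\|^{1/2}$; choosing $r \le \min(\delta^2, \tau^2, 1)$ guarantees both $\|z\| < \delta$ and $0 < t < \tau$. Then
\[
\frac{\|f(y) - f(a)\|}{\|y - a\|} = \frac{1}{t}\cdot\frac{\|f(a+tz) - f(a)\|}{\|z\|} \cdot \frac{1}{1}
\]
— more carefully, $\|f(y)-f(a)\| = \|f(a+tz)-f(a)\| = t \cdot \|(f(a+tz)-f(a))/t\| < t\cdot 1 = \|y-a\|^{1/2}$, hence
\[
\frac{\|f(y) - f(a)\|}{\|y - a\|} < \frac{\|y-a\|^{1/2}}{\|y-a\|} = \frac{1}{\|y-a\|^{1/2}},
\]
which unfortunately blows up. So a smarter splitting is needed: instead, given $y$ near $a$, set $z := y - a$ directly and $t := \|y-a\|$ — no, then $z$ need not be small relative to the scale at which the estimate is uniform. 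The correct device is to exploit that in the definition of $f'_{H+}(a,0)$ the pair $(z,t)$ ranges over $z \to 0$ \emph{and} $t \to 0+$ \emph{independently}; one should take $z_n \to 0$ a fixed auxiliary sequence and the genuine freedom is in the rescaling.

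The clean argument, which I expect to be the real content, is this: suppose for contradiction that $f$ is not Lipschitz at $a$, so there is a sequence $y_n \to a$, $y_n \neq a$, with $\|f(y_n) - f(a)\|/\|y_n - a\| \to \infty$. Put $s_n := \|y_n - a\|$ and $v_n := (y_n - a)/s_n \in S_X$. Choose a sequence $\lambda_n \to \infty$ with $\lambda_n s_n \to 0$ and $\lambda_n s_n \cdot \big(\|f(y_n)-f(a)\|/s_n\big)^{-1}\to 0$ slowly enough; set $t_n := \lambda_n s_n \to 0+$ and $z_n := v_n / \lambda_n$, so $\|z_n\| = 1/\lambda_n \to 0$, while $a + t_n z_n = a + s_n v_n = y_n$. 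Then
\[
\left\| \frac{f(a + t_n z_n) - f(a)}{t_n} \right\|
= \frac{\|f(y_n) - f(a)\|}{\lambda_n s_n}
= \frac{1}{\lambda_n}\cdot\frac{\|f(y_n)-f(a)\|}{s_n},
\]
and by choosing $\lambda_n$ to grow slower than $\|f(y_n)-f(a)\|/s_n$ (possible since the latter $\to\infty$) this quantity tends to $\infty$, contradicting that $f'_{H+}(a,0) = 0$ (which in particular requires the difference quotients over $\|z\|<\delta$, $0<t<\tau$ to stay bounded). The main obstacle is precisely this bookkeeping — selecting $\lambda_n$ so that simultaneously $\lambda_n \to \infty$, $\lambda_n s_n \to 0$, $\|z_n\| \to 0$, and the difference quotient diverges — but it is a standard diagonal-type choice (e.g. $\lambda_n := \min\{ (\|f(y_n)-f(a)\|/s_n)^{1/2},\, s_n^{-1/2}\}$ works once $n$ is large), so no genuine difficulty arises beyond care with the estimates.
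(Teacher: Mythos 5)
Your proposal is correct and follows essentially the same route as the paper: the trivial implications, the direct estimate for (iv) $\Rightarrow$ (iii), and (i) $\Rightarrow$ (iv) by contradiction via writing $y_n-a=t_nz_n$ with an intermediate scaling factor so that $z_n\to 0$, $t_n\to 0+$, yet the difference quotients blow up. The paper simply makes a concrete choice of that factor (taking $\|h_n\|<n^{-2}$, $\|f(a+h_n)-f(a)\|\ge n^2\|h_n\|$, $t_n=n\|h_n\|$, $z_n=h_n/(n\|h_n\|)$), which is an instance of your $\lambda_n$ bookkeeping, so the exploratory detour in your write-up is harmless.
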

\begin{proof}
The implications $(ii) \Rightarrow (i)$ and $(iii) \Rightarrow (ii)$ are trivial.

To prove $(iv) \Rightarrow (iii)$, suppose that $K>0$, $\delta>0$ and $\|f(x)-f(a)\| \leq K \|x-a\|$ for each $x \in B(a,\delta)$. To prove $(iii)$, let $\ep \in (0,K)$ be given. Then, for $z \in X$ with $\|z\|< \ep/K$ and $0< |t| < \delta$, we obtain
$$\|f(a+tz)-f(a)\| \leq    K |t| \frac{\ep}{K}, \ \ \ \ \  \left\|\frac{f(a+tz)-f(a)}{t}\right\|  \leq \ep,$$
 and $(iii)$ follows.
 
 To prove $(i) \Rightarrow (iv)$, suppose that $(iv)$ does not hold. Then, for each $n \in \N$, choose $h_n \in X$
  such that  $0 < \|h_n\| < n^{-2}$ and $\|f(a+h_n) - f(a)\| \geq n^2 \|h_n\|.$ Set  $t_n:= n \|h_n\|$ and $z_n:= n^{-1}\|h_n\|^{-1} h_n$.
   Then $z_n \to 0$, $t_n \to 0$, $t_n >0$, and
   $$ \left\|\frac{f(a+t_nz_n) - f(a)}{t_n}\right\| = \frac{\|f(a+h_n) - f(a)\|}{n\|h_n\|} \geq n,$$
    which clearly implies that  $f'_{H+}(a,0)$ does not exist.
\end{proof}

We will need also the following special case of \cite[Lemma 2.4]{Shk}. It can be proved by the Kuratowski-Ulam theorem
 (as is noted in \cite{Shk}), but the proof given in \cite{Shk} is more direct.
 
 \begin{lemma}\label{skarin}
 Let $U$ be an open subset of a Banach space $X$. Let $M \subset U$ be a set residual in $U$ and $z \in U$. Then there exists a line $L \subset X$ such that $z$ is a point of accumulation of $M \cap L$.
 \end{lemma}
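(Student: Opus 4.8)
The plan is to run a Kuratowski--Ulam type (``Fubini over category'') argument with the unit sphere $S_X$ as the parameter space, choosing the direction of the desired line only at the very end. First I would pass to a $G_\delta$ description of $M$: since $M$ is residual in $U$, fix open dense subsets $G_1\supseteq G_2\supseteq\cdots$ of $U$ with $M\supseteq\bigcap_n G_n$, and put $N_n:=U\setminus G_n$, so the $N_n$ are closed in $U$, nowhere dense, increasing, and $U\setminus\bigcup_n N_n\subseteq M$. Fix $r>0$ with $B(z,r)\subseteq U$. It then suffices to produce $v\in S_X$ for which $T:=\{\,t\in(-r,r):z+tv\notin\bigcup_n N_n\,\}$ is dense in $(-r,r)$; indeed $0$ is then an accumulation point of $T$, and the line $L:=z+\R v$ has $z$ as an accumulation point of $M\cap L$.

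The heart of the matter is to find such a $v$ by a Baire category argument carried out \emph{on $S_X$}, which is a complete metric space (being closed in the Banach space $X$) and hence a Baire space. For $n\in\N$ and rationals $a<b$ in $(-r,r)$ put
\[
 W_{n,a,b}:=\{\,v\in S_X:z+tv\in N_n\ \text{for all}\ t\in[a,b]\,\},
\]
a closed subset of $S_X$ because $N_n$ is closed in $U$. The key claim is that each $W_{n,a,b}$ is nowhere dense in $S_X$: if it had nonempty interior, there would be $v_0\in S_X$ and $\delta>0$ with $z+tv\in N_n$ whenever $v\in S_X$, $\|v-v_0\|<\delta$ and $t\in[a,b]$; after a sign normalization reducing to $b>0$, the ``shell--cone'' $\{\,x\in X:a'<\|x\|<b,\ \|\,x/\|x\|-v_0\,\|<\delta\,\}$ with $a':=\max(a,0)$ is a nonempty open subset of $X$ (continuity of $x\mapsto x/\|x\|$ off the origin) whose translate by $z$ lies in $N_n$ (take $v=x/\|x\|$, $t=\|x\|$), contradicting that $N_n$ is nowhere dense in $U$.

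Granting the claim, $V_n:=\bigcup_{a<b}W_{n,a,b}$ is meager in $S_X$, hence so is $\bigcup_n V_n$; since $S_X$ is Baire and nonempty we may pick $v\in S_X\setminus\bigcup_n V_n$. For this $v$ no subinterval of $(-r,r)$ is sent into any $N_n$ by $t\mapsto z+tv$, so each set $\{t\in(-r,r):z+tv\in N_n\}$ is relatively closed with empty interior, i.e.\ nowhere dense in $(-r,r)$. Therefore $T=\bigcap_n\{t:z+tv\notin N_n\}$ is residual, hence dense, in the Baire space $(-r,r)$, which is exactly what was needed.

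I expect the genuine obstacle to be precisely the nowhere-density of $W_{n,a,b}$: one has to convert ``a relatively open set of unit directions $\times$ an interval of positive radii'' into an honestly open subset of $X$, which is where the elementary geometry of the norm enters, and where the cases $0\notin[a,b]$ versus $0\in[a,b]$ must be dispatched. A secondary point worth stressing is that $X$ need not be separable, so one cannot simply invoke the standard second-countable form of the Kuratowski--Ulam theorem on $\R\times S_X$; the category--Fubini has to be done by hand, using only that $S_X$ and $(-r,r)$ are Baire. (A more direct, less category-theoretic construction of the line is also possible, as in \cite{Shk}.)
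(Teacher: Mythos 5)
Your argument is correct. Note, however, that the paper does not prove Lemma~\ref{skarin} at all: it is quoted as a special case of \cite[Lemma 2.4]{Shk}, with only the remark that it can be obtained from the Kuratowski--Ulam theorem but that Shkarin's proof is more direct. So your proposal is a genuine, self-contained alternative: writing the residual set as $\bigcap_n G_n$ modulo the closed nowhere dense sets $N_n=U\setminus G_n$, you in effect reprove the one easy direction of Kuratowski--Ulam in the special case needed, by a Baire category argument on $S_X$ (which is complete, hence Baire, with no separability needed). The key steps check out: $W_{n,a,b}$ is closed in $S_X$ because for $t\in[a,b]\subset(-r,r)$ the points $z+tv$ stay in $B(z,r)\subset U$ and $N_n$ is closed in $U$; the ``shell--cone'' $\{x: \max(a,0)<\|x\|<b,\ \|x/\|x\|-v_0\|<\delta\}$ is nonempty and open after the sign normalization, so its translate by $z$ being inside $N_n$ contradicts nowhere density; and a direction $v$ avoiding all $W_{n,a,b}$ makes each $\{t\in(-r,r):z+tv\in N_n\}$ relatively closed with empty interior, whence $T$ is residual and dense in $(-r,r)$, giving the accumulation at $z$ along $L=z+\R v$. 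Two small remarks. First, your closing caution about Kuratowski--Ulam is a bit overstated: the direction you need (a meager subset of $S_X\times(-r,r)$ has meager sections over comeagerly many $v\in S_X$) only requires the \emph{interval} factor to be second countable, so after the polar-coordinate homeomorphism $(v,t)\mapsto z+tv$ one could invoke \cite[Theorem 8.41]{Ke} even for nonseparable $X$ -- this is essentially how the paper uses Kuratowski--Ulam inside the proof of Lemma~\ref{dkuh}; still, your hands-on version is perfectly valid and keeps the lemma self-contained. Second, you implicitly assume $S_X\neq\emptyset$, i.e.\ $X\neq\{0\}$, which is the only case in which a line exists, so this is harmless but worth a word.
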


   \section{Results}   
    
    \begin{proposition}\label{cll}
    Let $X$ be a separable Banach space, $Y$ a Banach space, $G \subset X$ an open set, and $f:G \to Y$ a mapping. Let
     $A$ be the set of all points $x \in G$ for which there exists a cone $C= C(x,v,\delta)$ such that
     $\limsup_{y \to x, y \in C} \frac{\|f(y)-f(x)\|}{\|y-x\|} < \infty$ and $f$ is not Lipschitz at $x$.
      Then $A$  is a  $\sigma$-directionally porous  set.
       \end{proposition}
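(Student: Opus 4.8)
The plan is to write $A$ as a countable union of pieces — one for each choice of a cone direction from a fixed countable dense subset of $S_X$ and of rational scale and Lipschitz-constant parameters — and to prove that each piece is directionally porous, the direction of porosity being that cone direction.

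Concretely, I would fix a countable dense set $\{v_j\}_{j\in\N}\subset S_X$ and, for $j,k,l,m\in\N$, let $E_{j,k,l,m}$ be the set of all $x\in A$ for which $\|f(y)-f(x)\|\le k\,\|y-x\|$ whenever $y\in C(x,v_j,2/l)\cap B(x,1/m)$. The inclusion $\bigcup_{j,k,l,m}E_{j,k,l,m}\subseteq A$ is clear; for the reverse, if $x\in A$ there are $v\in S_X$, $\delta>0$, $K\in\N$, $\rho>0$ with $\|f(y)-f(x)\|\le K\|y-x\|$ on $C(x,v,\delta)\cap B(x,\rho)$, and choosing $j$ with $\|v_j-v\|<\delta/4$ and $l$ with $2/l<\delta/4$ (so that $C(x,v_j,2/l)\subseteq C(x,v,\delta)$, via the triangle inequality on directions), and then $k\ge K$, $m\ge 1/\rho$, one gets $x\in E_{j,k,l,m}$. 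Hence $A=\bigcup_{j,k,l,m}E_{j,k,l,m}$, and it is enough to show each $E:=E_{j,k,l,m}$ is directionally porous. Write $v:=v_j$ and put $p:=\tfrac1{8l}$.

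Next, fix $x\in E$. Since $x\in A$, $f$ is not Lipschitz at $x$, so there are $b_n\to x$, $b_n\ne x$, with $\|f(b_n)-f(x)\|\ge n\,\|b_n-x\|$; set $r_n:=\|b_n-x\|\to0$. For $n$ large enough that $r_n<1/m$ and $n>k$ the point $b_n$ cannot lie in $C(x,v,2/l)$, so $\|\tfrac{b_n-x}{r_n}-v\|\ge\tfrac2l$. The claim I would prove is that $B(x+r_nv,\,p\,r_n)\cap E=\emptyset$ for all large $n$; since $r_n\to0$, this is exactly directional porosity of $E$ at $x$ in the direction $v$, completing the proof. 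To attack the claim, suppose $z\in B(x+r_nv,p\,r_n)$. From $\|z-x-r_nv\|<p\,r_n$ one has $\tfrac78 r_n\le\|z-x\|\le\tfrac98 r_n$, and then \eqref{triangle} (with $u:=z-x$, $w:=r_nv$) gives $\|\tfrac{z-x}{\|z-x\|}-v\|\le\tfrac{2}{7l}<\tfrac2l$; thus for $n$ large $z\in C(x,v,2/l)\cap B(x,1/m)$, whence $\|f(z)-f(x)\|\le k\|z-x\|<2k\,r_n$. Consequently $\|f(b_n)-f(z)\|>(n-2k)r_n$ while $\|b_n-z\|\le\|b_n-x\|+\|x-z\|<3r_n$, so $f$ has difference quotient greater than $(n-2k)/3$ at $z$ in the direction of $b_n$.

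The hard part will be the final step: turning this large difference quotient into a genuine contradiction with $z\in E$. Being in $E$ forces $f$ to satisfy $\|f(y)-f(z)\|\le k\|y-z\|$ on $C(z,v,2/l)\cap B(z,1/m)$ \emph{and} to be non-Lipschitz at $z$; but $b_n$ itself need not lie in the good cone $C(z,v,2/l)$, so it cannot simply be fed into the bound. What must be exploited is that $z$ lies so close (on the scale $r_n$) to the axis point $x+r_nv$ that a whole ball $B(z,c\,r_n)$, with $c>0$ depending only on $l$, is contained in $C(x,v,2/l)\cap B(x,1/m)$, so that $f$ is $k$-Lipschitz relative to $x$ there and hence oscillates by at most a constant multiple of $r_n$ on $B(z,c\,r_n)$; one then has to confront the blow-up of $f$ at $z$ — which, $f$ being non-Lipschitz there, must be concentrated outside the narrow cone $C(z,v,2/l)$ — with this inherited control, arranging the cone openings and the constant $p$ so that the estimates close. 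Carrying out this bookkeeping (and, if it proves necessary, replacing the porosity direction $v$ by $-v$, or refining the decomposition once more so that the location of the blow-up at $x$ is also recorded) is the real content of the argument; the decomposition step above is routine packaging.
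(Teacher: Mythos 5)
Your decomposition is essentially the paper's, and the opening computations (the cone comparison giving $A=\bigcup E_{j,k,l,m}$, the estimate that any $z\in B(x+r_nv,pr_n)$ lies in $x$'s good cone, hence $\|f(z)-f(x)\|\le 2kr_n$) are correct. But the proof stops exactly where the real difficulty begins, and you say so yourself: having produced a point $z$ with a huge difference quotient towards $b_n$, you have no way to contradict $z\in E$, because the direction from $z$ to $b_n$ is completely uncontrolled, so the cone-Lipschitz condition at $z$ cannot be applied to $b_n$. The repair ideas you sketch do not close this gap either: the oscillation bound $\|f(y)-f(z)\|\le Ckr_n$ on a ball $B(z,cr_n)$ cannot be played against the non-Lipschitzness of $f$ at $z$, since that non-Lipschitzness only guarantees bad difference quotients along \emph{some} sequence of scales tending to $0$, which may be far smaller than $r_n$, where the oscillation bound is vacuous. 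So as it stands this is a genuine gap, not bookkeeping.

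The missing idea is where to put the porosity hole. In the paper the hole is \emph{not} placed at $x+r_nv$ on the scale $r_n$; it is placed on the opposite side and at a much larger scale, namely at $x_i:=x-6mr_iv_n$ with radius $r_i$ (so the porosity constant is about $1/(6m)$, where $1/m$ is the cone aperture). The point of this choice is that any hypothetical $z_i\in B(x_i,r_i)\cap A_{k,p,n,m}$ sees \emph{both} $x$ \emph{and} the bad point $y_i$ inside its own narrow cone: by \eqref{triangle}, the directions $\frac{x-z_i}{\|x-z_i\|}$ and $\frac{y_i-z_i}{\|y_i-z_i\|}$ differ from $v_n$ by less than $1/m$, because $x$ and $y_i$ are only $O(r_i)$ apart while both are at distance about $6mr_i$ from $z_i$. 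Then the cone-Lipschitz condition \emph{at $z_i$} applies to both points, giving $\|f(x)-f(y_i)\|\le k\|x-z_i\|+k\|y_i-z_i\|\le k(12m+3)r_i$, which contradicts the choice of $y_i$ with $\|f(y_i)-f(x)\|>k(12m+4)\|y_i-x\|$ (note the bad quotient must be taken larger than an explicit constant depending on $m$, not merely tending to infinity, though non-Lipschitzness at $x$ of course supplies this). Your parenthetical thought of ``replacing $v$ by $-v$'' points in the right direction, but without also enlarging the displacement to a multiple of $r_n$ comparable to the reciprocal of the cone aperture, and without applying the cone condition at $z$ to the pair $(x,b_n)$ rather than at $x$ to $z$, the argument cannot be made to close.
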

    \begin{proof}
    Let $\{v_n: n \in \N\}$ be a dense subset of $S_X$. For natural numbers  $k$, $p$, $n$, $m$ denote
     by $A_{k,p,n,m}$ the set of all points $x \in A$ such that
     \begin{equation}\label{trij}
     \frac{\|f(y)-f(x)\|}{\|y-x\|} < k\ \ \text{whenever}\ \ 0< \|y-x\|< \frac{1}{p}\ \ \text{and}\ \ 
      \left\| \frac{y-x}{\|y-x\|}- v_n\right\| < \frac{1}{m}.
     \end{equation}
     Since clearly $A$ is the union of all sets $A_{k,p,n,m}$, it is sufficient to prove that each set
    $A_{k,p,n,m}$ is directionally porous. So, suppose that natural numbers  $k$, $p$, $n$, $m$ and 
    $x \in A_{k,p,n,m}$ are given. It is sufficient to prove that $A_{k,p,n,m}$ is directionally porous
     at $x$. 
     
     To this end, find a sequence $y_i \to x$ such that 
     \begin{equation}\label{star}
     \text{ $\frac{\|f(y_i)-f(x)\|}{\|y_i-x\|} > k(12 m +4)$ for each
      $i \in \N$.}
      \end{equation}
       Set  $r_i:= \|y_i-x\|$ and $x_i:= x- 6mr_iv_n$. It is sufficient to prove that there exists
     $i_0 \in \N$ such that
     \begin{equation}\label{prmn}
      B(x_i,r_i) \cap A_{k,p,n,m} = \emptyset\ \ \ \text{for each}\ \ \ i \geq i_0.
     \end{equation}
     To this end,  consider $i \in \N$ and $z_i \in  B(x_i,r_i) \cap A_{k,p,n,m}$.
     
     Observe that
     $$ 6mr_i -r_i \leq \|x-x_i\|- \|x_i-z_i\| \leq \|x-z_i\| \leq \|x-x_i\|+ \|x_i-z_i\| \leq 6mr_i +r_i, $$ 
     $$6mr_i -2r_i \leq\|x-z_i\| -r_i \leq \|y_i-z_i\|   \leq\|x-z_i\| + r_i  \leq 6mr_i +2r_i.$$
     These inequalities imply that there exists $i_0 \in \N$ (independent on $i$)  such that
     \begin{equation}\label{jlm}
     0<  \|x-z_i\| < \frac{1}{p}\ \ \text{and}\ \ 0<  \|y_i-z_i\| < \frac{1}{p},\ \ \ \ \text{if}\ \  i \geq i_0.
     \end{equation}
     Applying  \eqref{triangle}, we obtain
     \begin{equation}\label{nabla}
     \left\| \frac{x-z_i}{\|x-z_i\|}- v_n\right\| = \left\| \frac{x-z_i}{\|x-z_i\|}-\frac{x-x_i}{\|x-x_i\|}\right\|
      \leq 2\frac{\|x_i-z_i\|}{6mr_i -r_i}< \frac{2 r_i}{6mr_i -r_i} \leq \frac{1}{m}\ \ \text{and}
     \end{equation}
     \begin{equation}\label{ctyr}
     \left\| \frac{y_i-z_i}{\|y_i-z_i\|}- v_n\right\| = \left\| \frac{y_i-z_i}{\|y_i-z_i\|}-\frac{x-x_i}{\|x-x_i\|}\right\| \leq 2 \frac{\|y_i-x\| +\|x_i-z_i\|}{\|y_i-z_i\|}
      < 2\frac{2r_i}{6mr_i -2r_i} \leq \frac{1}{m}.
     \end{equation}
     Since $z_i \in  A_{k,p,n,m}$, conditions \eqref{trij}, \eqref{jlm}, \eqref{nabla} and \eqref{ctyr} imply that, if $i \geq i_0$, then
     $$ \frac{\|f(x)-f(z_i)\|}{\|x-z_i\|} < k\ \ \ \text{and}\ \ \  \frac{\|f(y_i)-f(z_i)\|}{\|y_i-z_i\|} < k.$$
     So,  using also \eqref{star}, we obtain that, if $i \geq i_0$, then 
     \begin{multline*}
     r_i k (12m+4) \leq  \|f(x)-f(y_i)\| \leq      \|f(x)-f(z_i)\| + \|f(y_i)-f(z_i)\| \\
     < k \|x-z_i\| + k \|y_i-z_i\| \leq k (6mr_i +r_i +6mr_i +2r_i),
         \end{multline*}
     which is impossible. So we have proved \eqref{prmn}.
         \end{proof}

 \begin{proposition}\label{smerdh}
 Let $X$ be a separable Banach space, $Y$ a Banach space, $G \subset X$ an open set, and $f: G \to Y$ a mapping. Let $M$ be the set of all
  $x \in G$ at which $f$ is Lipschitz  and there exists $v\in X$ such that     $f'_+(x,v)$ exists but $f'_{H+}(x,v)$
   does not exist. Then $M$ is $\sigma$-directionally porous.
   \end{proposition}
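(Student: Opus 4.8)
The plan is to decompose $M$ into countably many directionally porous pieces, in the spirit of Proposition \ref{cll} but with a lighter parametrization. First I would normalize the witnessing direction: if $x\in M$ and $v$ witnesses $x\in M$, then $v\neq 0$, since otherwise $f'_{H+}(x,0)$ would exist by Lemma \ref{hnli} (as $f$ is Lipschitz at $x$); and since $f'_+(x,\cdot)$ and $f'_{H+}(x,\cdot)$ are positively homogeneous, I may assume $v\in S_X$. Next I would reformulate the failure of Hadamard differentiability: if $f'_{H+}(x,v)$ existed it would equal $L:=f'_+(x,v)$ (test with the constant ``sequence'' $z\equiv v$), so ``$f'_{H+}(x,v)$ does not exist'' means precisely that there is $\ve>0$ such that for all $\eta,\delta>0$ there are $z\in X$ and $t\in(0,\delta)$ with $\|z-v\|<\eta$ and $\|f(x+tz)-f(x)-tL\|\ge\ve t$.

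For $k,p\in\N$ let $M_{k,p}$ be the set of those $x\in M$ for which $\|f(y)-f(x)\|\le k\|y-x\|$ whenever $y\in G$ and $\|y-x\|<1/p$. Since $f$ is Lipschitz at every point of $M$, we have $M=\bigcup_{k,p}M_{k,p}$, so it suffices to prove that each $M_{k,p}$ is directionally porous. Fix $k,p$ and $x\in M_{k,p}$ with a witnessing $v\in S_X$ and $\ve>0$ as above. Taking $\eta=\delta=1/j$ produces $z_j$ with $\|z_j-v\|<1/j$ and $t_j\in(0,1/j)$ with $\|f(x+t_jz_j)-f(x)-t_jL\|\ge\ve t_j$; combining this with $\frac{f(x+t_jv)-f(x)}{t_j}\to L$ gives, for all large $j$,
$$\|f(x+t_jv)-f(x+t_jz_j)\|\ge\frac{\ve}{2}\,t_j.$$
Put $a_j:=x+t_jv$ and $\rho:=\frac{\ve}{8k}$; I claim $B(a_j,\rho t_j)\cap M_{k,p}=\emptyset$ for all large $j$. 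Indeed, if $c$ belonged to this set, then for $j$ large $\|z_j-v\|<\rho$, hence $\|(x+t_jz_j)-a_j\|=t_j\|z_j-v\|<\rho t_j$, so $\|a_j-c\|<\rho t_j$ and $\|(x+t_jz_j)-c\|<2\rho t_j$, and both are $<1/p$ once $j$ is large; then the Lipschitz bound at $c$ yields $\|f(a_j)-f(c)\|\le k\rho t_j=\frac{\ve}{8}t_j$ and $\|f(x+t_jz_j)-f(c)\|\le 2k\rho t_j=\frac{\ve}{4}t_j$, whence $\|f(a_j)-f(x+t_jz_j)\|\le\frac{3\ve}{8}t_j$, contradicting the displayed inequality. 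Since $a_j=x+t_jv$, $t_j\to0+$ and the radius is $\rho t_j$ with $\rho>0$ fixed, $M_{k,p}$ is directionally porous at $x$ in the direction $v$; as $x$ was arbitrary, $M_{k,p}$ is directionally porous, so $M$ is $\sigma$-directionally porous.

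I expect the main obstacle — and the reason Proposition \ref{cll} cannot simply be quoted — to be the placement and size of the porosity ball. The ``bad pair'' $a_j=x+t_jv$ and $x+t_jz_j$ sits at mutual distance $t_j\|z_j-v\|=o(t_j)$ but at distance $\approx t_j$ from $x$, so a ball enclosing the segment between the two points, of radius comparable to that segment, would have radius-to-distance ratio $\|z_j-v\|\to0$ and would fail to certify directional porosity at $x$. The point that rescues the argument is that the oscillation $\|f(a_j)-f(x+t_jz_j)\|\ge\frac{\ve}{2}t_j$ is large relative to $t_j$ itself, not merely relative to the much smaller distance between the two points; this forces every point that is $k$-Lipschitz on a ball of radius $1/p$ to stay a fixed fraction of $t_j$ away from the pair, which is exactly the size of hole directional porosity requires. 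Because only the pointwise Lipschitz estimate enters (there is no cone condition as in Proposition \ref{cll}), the decomposition needs only the two indices $k$ and $p$, and separability of $X$ is not actually needed for this particular step.
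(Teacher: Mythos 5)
Your proof is correct and follows essentially the same route as the paper: decompose $M$ by the pointwise Lipschitz constant and radius, fix $\ve>0$ witnessing the failure of $f'_{H+}(x,v)$ to equal $L=f'_+(x,v)$, extract $t_j\to 0+$ and $z_j\to v$ with a large deviation, and show via the triangle inequality and the Lipschitz bound at any hypothetical point of $M_{k,p}$ that balls $B(x+t_jv,\rho t_j)$ with $\rho$ a fixed multiple of $\ve/k$ miss $M_{k,p}$. The only differences are cosmetic (two indices $k,p$ instead of one, slightly different constants, and isolating the inequality $\|f(x+t_jv)-f(x+t_jz_j)\|\ge\tfrac{\ve}{2}t_j$ before deriving the contradiction), and your observation that separability is not needed here is accurate.
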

   \begin{proof}
   For each $k \in \N$, set
   $$M_k:= \{x \in M:\ \|f(y)-f(x)\| \leq k \|y-x\|\ \ \text{ whenever}\ \ \|y-x\| <1/k\}.$$
   It is clearly sufficient to prove that each $M_k$ is directionally porous. So suppose that $k \in \N$
    and $x \in M_k$. Choose $v \in X$ such that     $f'_+(x,v)=:y$ exists but $f'_{H+}(x,v)$
   does not exist. Lemma \ref{hnli} gives that $v \neq 0$.  We will prove that $M_k$ is porous at $x$ in direction $v$.
   Since $f'_{H+}(x,v)$
   does not exist, we can choose $\ep>0$ such that for each $\delta>0$ there exists $w \in B(v,\delta)$ and $t \in (0,\delta)$
    with $ \|t^{-1}(f(x+tw)-f(x)) - y\| > \ep$.
    
    For each $n \in \N$, we choose $w_n \in B(v,1/n)$ and $t_n \in (0,1/n)$ such that
    \begin{equation}\label{vnep}
     \|(t_n)^{-1}(f(x+t_n w_n)-f(x)) - y\| > \ep.  
     \end{equation}
        Set  $x_n:= x +t_n v$ and $p: = \ep (3k)^{-1}$. It is sufficient to prove that there exists $n_0 \in \N$
     such that
     \begin{equation}\label{praz}
     B(x_n, pt_n) \cap M_k  = \emptyset\ \ \text{for each}\ \ n \geq n_0.
         \end{equation}
    To this end consider $n \in \N$ and $z_n \in  B(x_n, pt_n) \cap M_k$. Denote $x_n^*:= x + t_n w_n$. 
    We have
    \begin{equation}\label{hvd}
     \|z_n -x_n\| < pt_n,\ \ \ \|z_n-x_n^*\| \leq \|x_n- x_n^*\|+ \|z_n - x_n\| \leq t_n/n + p t_n.
     \end{equation}
    Thus there exists $\widetilde{n_0} \in \N$ (independent on $n$) such that the inequality $n \geq \widetilde{n_0}$
     implies  $\|z_n -x_n\| < 1/k$, $\|z_n-x_n^*\|< 1/k$, and thus, since $z_n \in M_k$, also
     \begin{equation}\label{trj}
     \|f(x_n)- f(z_n)\| \leq k \|x_n - z_n\|\ \ \text{and}\ \ \|f(x_n^*) - f(z_n)\| \leq k \|x_n^* - z_n\|.
     \end{equation}
    If $n \geq \widetilde{n_0}$, then  \eqref{vnep}, \eqref{hvd} and \eqref{trj} imply
    \begin{eqnarray*}
    \ep&<&\left\|\frac{f(x_n^*)-f(x)}{t_n}- y\right\|    \\ 
    &\leq& \left\|\frac{f(x_n)-f(x)}{t_n}- y\right\| + \frac{\|f(x_n)-f(z_n)\|}{t_n} +  \frac{\|f(x_n^*)-f(z_n)\|}{t_n}\\
    &\leq& \left\|\frac{f(x_n)-f(x)}{t_n}- y\right\| + kp + k (1/n+p)\\
     &=& \left\|\frac{f(x_n)-f(x)}{t_n}- y\right\| + (2/3)\ep + k/n.
    \end{eqnarray*}
    Since $\left\|\frac{f(x_n)-f(x)}{t_n}- y\right\| \to 0$, we obtain a contradiction, if $n>n_0$, where
     $n_0 \geq  \widetilde{n_0}$ is sufficiently large (independent on $n$). So \eqref{praz} is proved.
     \end{proof}
     
     \begin{remark}\label{twos}
     The corresponding ``two-sided'' result which works with $f'(x,v)$ and $f'_H(x,v)$ clearly follows from
     Proposition   \ref{smerdh}. 
          \end{remark}
     
     Using Proposition \ref{smerdh} (together with Remark \ref{twos}) and Lemma \ref{eh}, we immediately obtain:
     
     \begin{theorem}\label{lgaha}
     Let $X$ be a separable Banach space, $Y$ a Banach space, $G \subset X$ an open set and $f: G \to Y$ a mapping.
     Then the set of  all points at
   which $f$ is Lipschitz and G\^ ateaux differentiable but is not Hadamard differentiable is  $\sigma$-directionally   porous. 
   \end{theorem}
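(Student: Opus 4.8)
The plan is to deduce this directly from Proposition \ref{smerdh} (in its two-sided form, Remark \ref{twos}) together with the characterization of Hadamard differentiability provided by Lemma \ref{eh}. First I would recall that, by definition, the assertion ``$f$ is G\^ateaux differentiable at $x$ but not Hadamard differentiable at $x$'' means that there is a continuous linear operator $L : X \to Y$ with $f'(x,v) = L(v)$ for every $v \in X$, while $L$ fails to be a Hadamard derivative of $f$ at $x$. By the equivalence $(i) \Leftrightarrow (ii)$ in Lemma \ref{eh}, the failure of $L$ to be the Hadamard derivative is equivalent to the existence of some $v \in X$ for which $f'_H(x,v) \neq L(v)$; and since $f'(x,v) = L(v)$ exists, this is in turn equivalent to saying that $f'_H(x,v)$ does not exist (if it existed it would, by the two-sided analogue of the implication $f'_H(x,v)$ exists $\Rightarrow f'(x,v)$ exists with the same value, have to equal $L(v)$). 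Thus at such a point $x$ there is a direction $v$ with $f'(x,v)$ existing but $f'_H(x,v)$ not existing.

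Next I would observe that the set in question is contained in the set $M$ of Proposition \ref{smerdh} read two-sidedly (via Remark \ref{twos}): namely the set of all $x \in G$ at which $f$ is Lipschitz and there exists $v \in X$ such that $f'(x,v)$ exists but $f'_H(x,v)$ does not. Indeed, every point at which $f$ is Lipschitz and G\^ateaux differentiable but not Hadamard differentiable lies in this $M$ by the previous paragraph. Proposition \ref{smerdh} together with Remark \ref{twos} tells us precisely that this $M$ is $\sigma$-directionally porous. Since any subset of a $\sigma$-directionally porous set is again $\sigma$-directionally porous (a directionally porous set remains directionally porous after passing to a subset, because the emptiness of a ball intersected with the larger set forces emptiness with the smaller one, and we just restrict the countable decomposition), the desired conclusion follows.

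There is essentially no obstacle here: the content has already been extracted into Proposition \ref{smerdh}, and the only thing to check is the translation between the ``fails to be Hadamard differentiable'' formulation and the ``some directional Hadamard derivative does not exist'' formulation, which is exactly what Lemma \ref{eh} supplies. The one point that deserves a sentence of care is why $f'_H(x,v)$ must \emph{fail to exist} rather than merely take a wrong value: this uses that Hadamard directional differentiability implies ordinary directional differentiability with the same limit, so if $f'(x,v)$ already exists and $f'_H(x,v)$ also existed, the two would coincide, and coincide with $L(v)$, contradicting the failure of Hadamard differentiability. With that remark in place the proof is a two-line citation of Proposition \ref{smerdh}, Remark \ref{twos}, and Lemma \ref{eh}.
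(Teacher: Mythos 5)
Your proposal is correct and matches the paper's own argument: the paper obtains Theorem \ref{lgaha} immediately from Proposition \ref{smerdh} (with Remark \ref{twos}) and Lemma \ref{eh}, exactly the inclusion into the set $M$ that you describe. Your extra remark on why $f'_H(x,v)$ must fail to exist (rather than merely differ from $L(v)$) is precisely the routine verification the paper leaves implicit.
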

   
   Theorem \ref{lgaha} together with \cite[Corollary 3.9]{Shk} have the following consequence.
   
   \begin{corollary}\label{plgh}
   Let $X$ be a separable Banach space, $Y$ a Banach space, $G \subset X$ an open set, and $f:G \to Y$ a 
    pointwise Lipschitz mapping. Then the set of  all points from $G$ at
   which $f$ is G\^ ateaux differentiable but is not Hadamard differentiable is nowhere dense and  $\sigma$-directionally   porous. 
   \end{corollary}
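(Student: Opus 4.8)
The plan is to deduce Corollary \ref{plgh} by combining Theorem \ref{lgaha} with the structural result \cite[Corollary 3.9]{Shk}, which (under pointwise Lipschitzness of $f$ on the open set $G\subset X$ with $X$ separable) asserts that $f$ is locally Lipschitz on a dense open subset $U\subset G$. First I would invoke Theorem \ref{lgaha} directly: the set $P$ of all points of $G$ at which $f$ is Lipschitz and G\^ateaux differentiable but not Hadamard differentiable is $\sigma$-directionally porous. Since $f$ is pointwise Lipschitz by hypothesis, every point of $G$ at which $f$ is G\^ateaux differentiable but not Hadamard differentiable already satisfies the ``Lipschitz at $x$'' clause, so the exceptional set we care about in the corollary is exactly $P$. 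Thus it only remains to upgrade ``$\sigma$-directionally porous'' to ``nowhere dense and $\sigma$-directionally porous.''

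\smallskip

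For the nowhere-density claim I would use the dense open set $U$ from \cite[Corollary 3.9]{Shk} on which $f$ is locally Lipschitz. On $U$, the equivalence \eqref{lipgha} applies: at every point of $U$ where $f$ is G\^ateaux differentiable, $f$ is automatically Hadamard differentiable. Hence $P\cap U=\emptyset$, so $P\subset G\setminus U$, which is a closed set with empty interior in $G$ (being the complement of a dense open set). A subset of a nowhere dense set is nowhere dense, so $P$ is nowhere dense. Together with the $\sigma$-directional porosity from Theorem \ref{lgaha}, this gives the conclusion. (Alternatively one could avoid \eqref{lipgha} and instead note that, since each directionally porous set is porous and hence nowhere dense, a $\sigma$-directionally porous set is of first category; but to get \emph{nowhere dense}, rather than merely first category, one genuinely needs the local-Lipschitz-on-a-dense-open-set input, so the route through $U$ is the right one.)

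\smallskip

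I do not expect any serious obstacle: the corollary is essentially a bookkeeping combination of two already-quoted facts. The only point requiring a little care is checking that the ``Lipschitz at $x$'' hypothesis in Theorem \ref{lgaha} is harmless here — it is, because pointwise Lipschitzness is assumed — and that the set $G\setminus U$ is nowhere dense in $G$, which is immediate from $U$ being dense and open. If one wanted to be completely self-contained about why $P\cap U=\emptyset$, one would recall that for a locally Lipschitz mapping G\^ateaux and Hadamard differentiability coincide, which is precisely \eqref{lipgha}. So the proof is: apply Theorem \ref{lgaha} to get $\sigma$-directional porosity of $P$; apply \cite[Corollary 3.9]{Shk} to get the dense open $U$; observe $P\subset G\setminus U$ via \eqref{lipgha}; conclude $P$ is nowhere dense.
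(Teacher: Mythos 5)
Your proposal is correct and is essentially the paper's own argument: the corollary is stated there as an immediate consequence of Theorem \ref{lgaha} combined with \cite[Corollary 3.9]{Shk}, exactly as you do — porosity from Theorem \ref{lgaha} (the Lipschitz-at-$x$ hypothesis being automatic under pointwise Lipschitzness), and nowhere density from the dense open set $U$ of local Lipschitzness together with \eqref{lipgha}.
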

     
     \begin{lemma}\label{dkuh}
     Let $X$ be a  separable Banach space, $Y$ a Banach space, $G \subset X$ an open set, and $f:G \to Y$ a mapping.
     Suppose that $f$ has the Baire property and, for each line $L \subset X$ with $G \cap L \neq \emptyset$,
     the restriction of $f$ to $G \cap L$ is continuous. Denote by $A$ the set of all points $x \in G$ for which
      there exists a set $B_x \subset S_X$ of the second category in $S_X$ such that
      \begin{equation}\label{omdk}
       \limsup_{t \to 0+} \frac{\|f(x+tv)-f(x)\|}{t} < \infty\ \ \ \text{for each}\ \ \ v \in B_x
       \end{equation}
      and $f$ is not Lipschitz at $x$. Then $A$ is a $\sigma$-directionally porous set.
          \end{lemma}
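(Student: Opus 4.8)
The plan is to deduce Lemma~\ref{dkuh} from Proposition~\ref{cll}: I will show that every $x\in A$ admits a cone $C(x,v,\eta)$ with
$\limsup_{y\to x,\,y\in C(x,v,\eta)}\frac{\|f(y)-f(x)\|}{\|y-x\|}<\infty$.
Since $f$ is not Lipschitz at such an $x$ by the definition of $A$, this puts $x$ into the set considered in Proposition~\ref{cll}, which is $\sigma$-directionally porous; as every subset of a $\sigma$-directionally porous set is again $\sigma$-directionally porous, the lemma follows.

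Fix $x\in A$. The first step is to upgrade the hypothesis on $B_x$ to a Baire-category statement about a whole ball of directions. For $J,k\in\N$ put
\[
  \tilde B^{J,k}:=\{v\in S_X:\ \|f(x+tv)-f(x)\|\le Jt\ \text{ for all }t\in(0,1/k)\},
\]
so that $B_x\subseteq\bigcup_{J,k}\tilde B^{J,k}$. Using the homeomorphism $y\mapsto(\tfrac{y-x}{\|y-x\|},\|y-x\|)$ from a punctured ball around $x$ onto $S_X\times(0,1/k)$ together with the Baire property of $f$, the two-variable function $(v,t)\mapsto\|f(x+tv)-f(x)\|/t$ has the Baire property on the open set $S_X\times(0,1/k)$. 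By the Kuratowski--Ulam theorem, for all $t$ in some comeager (hence dense) subset $D$ of $(0,1/k)$ the slice $v\mapsto\|f(x+tv)-f(x)\|$ has the Baire property on $S_X$; and since $f$ is continuous along the line $x+\R v$, membership $v\in\tilde B^{J,k}$ can be tested using only a fixed countable dense subset of $D$. Hence each $\tilde B^{J,k}$ is a countable intersection of Baire-property subsets of $S_X$, so it has the Baire property. As $B_x$ is of the second category, some $\tilde B^{J,k}$ is non-meager, hence comeager in a ball $W:=B_{S_X}(v,\eta)$ for suitable $v\in S_X$ and $\eta>0$; enlarging $k$ if necessary we may assume $B(x,1/k)\subseteq G$.

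The main step turns ``$f$ is $J$-Lipschitz along comeagerly many rays'' into ``$f$ is $J$-Lipschitz on a full cone''. Put $U:=C(x,v,\eta)\cap\bigl(B(x,1/k)\setminus\{x\}\bigr)$, an open subset of $X$ which the polar homeomorphism identifies with $W\times(0,1/k)$, and let $M:=\{p\in U:\ \|f(p)-f(x)\|\le J\|p-x\|\}$. Because $\tilde B^{J,k}$ is comeager in $W$, the set $M$ contains $\{x+tw:\ w\in\tilde B^{J,k}\cap W,\ 0<t<1/k\}$, which corresponds to $(\tilde B^{J,k}\cap W)\times(0,1/k)$ and is therefore residual in $U$. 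Given any $y\in U$, Lemma~\ref{skarin} yields a line $L$ such that $y$ is a point of accumulation of $M\cap L$; choosing $p_j\in M\cap L$ with $p_j\to y$ and using that $f$ is continuous on $G\cap L$ (note $y\in L\cap G$ and $p_j\in L\cap G$), we get $f(p_j)\to f(y)$, and letting $j\to\infty$ in $\|f(p_j)-f(x)\|\le J\|p_j-x\|$ gives $\|f(y)-f(x)\|\le J\|y-x\|$. Thus $\|f(y)-f(x)\|\le J\|y-x\|$ for every $y\in C(x,v,\eta)$ near $x$, which is the required cone; by the first paragraph the proof is then complete.

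I expect the main obstacle to be the measurability bookkeeping of the second step: one cannot claim directly that $v\mapsto\|f(x+tv)-f(x)\|$ has the Baire property on $S_X$, since $x+tS_X$ is nowhere dense in $X$ and the Baire property of $f$ on $X$ carries no information about its restriction to such a set; the remedy is to work with the two-variable function on the open set $S_X\times(0,1/k)$ and extract good slices via Kuratowski--Ulam, combined with line-continuity to test $\tilde B^{J,k}$ on a countable set of those slices. The geometric content — filling in the ``bad'' directions to get a genuine cone estimate rather than merely a residual one — is precisely where Lemma~\ref{skarin} and the continuity of $f$ along lines are used.
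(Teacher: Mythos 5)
Your proof is correct, and its skeleton is the same as the paper's: reduce to Proposition \ref{cll} by producing a cone estimate at each $x\in A$, pass to polar coordinates $(v,t)\mapsto x+tv$, use the Baire property of $f$ to show that the set of ``good'' directions ($\tilde B^{J,k}$, the paper's $S_{p,k}$) has the Baire property and hence is residual in some ball $W\subset S_X$, and then upgrade the resulting residual set $M$ in the cone to the whole cone via Lemma \ref{skarin} and continuity of $f$ along lines. The one place where you genuinely diverge is the verification that $\tilde B^{J,k}$ has the Baire property. The paper works with the complement: it writes the ``bad'' set $C$ as $H\cup T$ with $H$ a $G_\delta$ and $T$ meager, uses that the projection $\pi(H)$ is analytic (hence has the Baire property), and applies Kuratowski--Ulam together with the openness of the ray-sections $C_v$ (which comes from line-continuity) to show $\pi(T)\setminus\pi(H)$ is meager. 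You instead apply Kuratowski--Ulam in the transposed direction, to the radial variable: for comeagerly many $t$ the spherical slice $\{v:\|f(x+tv)-f(x)\|\le Jt\}$ has the Baire property, and line-continuity lets you write $\tilde B^{J,k}$ as a countable intersection of such slices over a countable dense set of radii. This is a legitimate and arguably simpler route, since it avoids any appeal to analytic sets and projections, at the cost of invoking the ``comeagerly many sections have the Baire property'' part of Kuratowski--Ulam rather than only its meagerness part. Two cosmetic points: fix at the outset some $k_0$ with $B(x,1/k_0)\subset G$ and only consider $k\ge k_0$, so that $f(x+tv)$ is defined for all $(v,t)\in S_X\times(0,1/k)$ before you discuss $\tilde B^{J,k}$ (your ``enlarge $k$'' remark comes a bit late but is harmless, since increasing $k$ only enlarges $\tilde B^{J,k}$); and note that you only need the Baire property of the sets $\{v:\|f(x+tv)-f(x)\|\le Jt\}$, i.e.\ sections of one fixed set with the Baire property, not of the whole slice function, which makes the Kuratowski--Ulam appeal even more direct.
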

    \begin{proof}
    By Proposition \ref{cll} it is sufficient to prove that for each $x \in A$ there exists $v \in S_X$
     and $\delta>0$ such that
     \begin{equation}\label{pruh}
     \limsup_{y \to x, y \in C(x,v,\delta)} \frac{\|f(y)-f(x)\|}{\|y-x\|} < \infty.
          \end{equation}
       So fix a point $x \in A$. Find $p_0 \in \N$ with $B(x, 1/p_0) \subset G$ and, for natural numbers  $p \geq p_0$,   $k$,  denote
       $$ S_{p,k}:= \{v \in S_X: \frac{\|f(x+tv)-f(x)\|}{t} \leq k\ \ \ \text{for each}\ \ \ 0<t < 1/p.\}$$ 
       Since $B_x$ is clearly covered by all sets  $S_{p,k}$, we can find $k$ and  $p \geq p_0$ such that
       $ S_{p,k}$ is a second category set (in $S_X$).
       
       Without any loss of generality, we can clearly suppose that $x=0$ and $f(x)=f(0)=0$.
              We will show that $S:= S_{p,k}$ has the Baire property  (in $S_X$).  Set  $E:= B(0,1/p) \setminus \{0\}$ and $E^*:= S_X \times (0,1/p)$ (equipped with the product topology). For each $(v,t) \in E^*$ set $\vf((v,t)):= tv$. Then $\vf: E^* \to E$ is clearly a homeomorphism
       (with $\vf^{-1}(z) = (z/\|z\|, \|z\|)$ for $z \in E$). Set
       $$M:= \{z \in E: \frac{\|f(z)\|}{\|z\|} \leq k\} = \{z\in E: \|f(z)\| - k\|z\| \leq 0\}.$$
       Since $f$ has the Baire property, using continuity of norms we easily obtain that the real function
        $z \mapsto  \|f(z)\| - k\|z\|$ has the Baire property on $E$, and so $M$ has the Baire property in $E$.
        Consequently $M^*:= \vf^{-1}(M)$ and $C:= E^* \setminus M^*$ have the Baire property in $E^*$.
        
        For any set $A \subset E^*$ and $v \in S_X$ define the section $A_v:= \{t \in (0,1/p): (v,t) \in A\}$ and the projection $\pi(A):= \{v \in S_X:\ A_v \neq \emptyset\}$. It is easy to see that
        \begin{equation}\label{dopl}
         S = S_X \setminus \pi(C).
         \end{equation}
         Using the continuity of $f$ on the set $\{tv: t \in (0,1/p)\}$, we obtain that
         \begin{equation}\label{ope}
         C_v \ \ \text{is open for each}\ \  v \in S_X.
         \end{equation}
    Since  $C$ has the Baire property, we can write $C= H \cup T$, where $H$ is a $G_{\delta}$ set (in $E^*$)
     and $T$ is a first category set (in $E^*$). We have  
     \begin{equation}\label{sjr}
     \pi(C) = \pi(H) \cup \pi(T) = \pi(H) \cup(\pi(T)\setminus \pi(H)).
     \end{equation}
     The set $\pi(H)$ is analytic (see, e.g \cite[Exercise 14.3]{Ke}), and so (see \cite[Theorem 21.6]{Ke}) it has the Baire property. By Kuratowski-Ulam theorem
     (see, e.g. \cite[Theorem 8.41]{Ke}) there exists a first category set $Z \subset S_X$ such that $T_v$ is a first category subset of $(0,1/p)$
     for each $v \in S_X \setminus Z$.
     
      The inclusion $\pi(T)\setminus \pi(H) \subset Z$ holds. Indeed, suppose
      on the contrary that there exists $v \in  (\pi(T)\setminus \pi(H)) \setminus Z$. Then $T_v$ is a nonempty first category set and $H_v = \emptyset$. Thus $C_v = H_v \cup T_v = T_v$. Using \eqref{ope}, we obtain that $C_v$ is a nonempty open first category set, which is a contradiction. 
      
      Consequently $\pi(T)\setminus \pi(H)$ is a first category set and thus $S$ has the Baire property
       by \eqref{dopl} and \eqref{sjr}. Since $S$ is also a second category set, we can choose $v \in S_X$ and
        $\delta>0$ such that $S=S_{p,k}$ is residual in $S_X \cap B(v,\delta)$.
         Set $U^*:= (S_X \cap B(v,\delta)) \times (0,1/p)$, $\psi:=\vf\restriction_{U^*}$ and $U:= C(0,v,\delta) \cap B(0,1/p)$. Then $\psi:U^*\to U$ is clearly a homeomorphism and $\psi(S \times (0,1/p))\subset M$. Since $S \times (0,1/p)$ is residual in $U^*$ (see \cite[Lemma 8.43]{Ke}), we obtain that $M$ is residual in $U$. Now consider an arbitrary $z \in U$.
         By Lemma \ref{skarin} there exists a line $L \subset X$ and points $z_n \in M \cap L\cap U$ with $z_n \to z$. 
         Since the restriction of $f$ to $L \cap U$ is continuous, we obtain $\frac{\|f(z_n)\|}{\|z_n\|}\to 
       \frac{\|f(z)\|}{\|z\|}$, and consequently $z \in M$. So $U \subset M$, which implies \eqref{pruh}.
          \end{proof}
   In fact, we will use only the following immediate consequence of Lemma \ref{dkuh}.       
  \begin{lemma}\label{dkuhs}
     Let $X$ be a separable Banach space, $Y$ a Banach space, $G \subset X$ an open set, and $f:G \to Y$ a mapping.
     Suppose that $f$ has the Baire property and, for each line $L \subset X$ with $G \cap L \neq \emptyset$,
     the restriction of $f$ to $G \cap L$ is continuous. Denote by $B$ the set of all points $x \in G$ at which
     $f$ is G\^ ateaux differentiable 
      and $f$ is not Lipschitz at $x$. Then $B$ is a $\sigma$-directionally porous set.
          \end{lemma}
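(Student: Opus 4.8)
The plan is to deduce Lemma \ref{dkuhs} directly from Lemma \ref{dkuh} by checking that, under the hypothesis of G\^ateaux differentiability at $x$, the quantitative condition \eqref{omdk} holds with $B_x$ equal to the \emph{whole} sphere $S_X$ (which is trivially of the second category in itself). Indeed, if $f$ is G\^ateaux differentiable at $x$ with G\^ateaux derivative $L=f'_G(x)$, then for every $v\in S_X$ the limit $\lim_{t\to 0}\frac{f(x+tv)-f(x)}{t}=L(v)$ exists in $Y$; in particular the one-sided limit along $t\to 0+$ exists, so the function $t\mapsto \frac{\|f(x+tv)-f(x)\|}{t}$ is bounded near $0+$, i.e.\ $\limsup_{t\to 0+}\frac{\|f(x+tv)-f(x)\|}{t}=\|L(v)\|<\infty$. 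Hence condition \eqref{omdk} is satisfied for \emph{every} $v\in B_x:=S_X$.

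The remaining step is bookkeeping about the two sets involved. Let $A$ be the set from Lemma \ref{dkuh} (points where $f$ is not Lipschitz but \eqref{omdk} holds on a second-category $B_x\subset S_X$) and let $B$ be the set from the present statement (points where $f$ is G\^ateaux differentiable but not Lipschitz at $x$). By the observation of the previous paragraph, every $x\in B$ is a point at which $f$ is not Lipschitz and for which \eqref{omdk} holds with $B_x=S_X$, which is certainly of the second category in $S_X$; therefore $B\subset A$. The hypotheses on $f$ (the Baire property, and continuity of all line restrictions) are identical in both lemmas, so Lemma \ref{dkuh} applies verbatim and tells us $A$ is $\sigma$-directionally porous. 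Since any subset of a $\sigma$-directionally porous set is again $\sigma$-directionally porous (each directionally porous set, being porous at each of its points in some direction, remains porous at each point of an arbitrary subset), it follows that $B$ is $\sigma$-directionally porous. This completes the proof.

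There is essentially no obstacle here: the content of the result lies entirely in Lemma \ref{dkuh}, and Lemma \ref{dkuhs} is the clean, ready-to-use specialization obtained by noting that G\^ateaux differentiability forces the directional difference quotients to stay bounded in every direction. The only point requiring a word of care is the monotonicity of the class of $\sigma$-directionally porous sets under passing to subsets, but this is immediate from the definition of directional porosity \emph{at a point}, which refers only to the point and the set and is preserved under shrinking the set. Consequently the write-up can be a single short paragraph: record that $B\subset A$ via the G\^ateaux hypothesis, invoke Lemma \ref{dkuh}, and note that subsets of $\sigma$-directionally porous sets are $\sigma$-directionally porous.
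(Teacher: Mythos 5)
Your proof is correct and matches the paper's intent exactly: the paper presents Lemma \ref{dkuhs} as an ``immediate consequence'' of Lemma \ref{dkuh}, the implicit argument being precisely yours — G\^ateaux differentiability at $x$ makes the directional difference quotients bounded for every $v\in S_X$, so \eqref{omdk} holds with $B_x=S_X$ (second category in itself), giving $B\subset A$, and subsets of $\sigma$-directionally porous sets are $\sigma$-directionally porous.
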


As an immediate application of Lemma \ref{dkuhs} and Theorem \ref{lgaha} we obtain

\begin{proposition}\label{prgh}
 Let $X$ be a separable Banach space, $Y$ a Banach space, $G \subset X$ an open set, and $f:G \to Y$ a mapping.
     Suppose that $f$ has the Baire property and, for each line $L \subset X$ with $G \cap L \neq \emptyset$,
     the restriction of $f$ to $G \cap L$ is continuous. Then the set of all points $x\in G$ at which $f$ is 
 G\^ ateaux differentiable but is not Hadamard differentiable is  $\sigma$-directionally   porous. 
 \end{proposition}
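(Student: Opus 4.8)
The plan is to decompose the exceptional set and invoke the two results established just above. Write $P$ for the set of all $x\in G$ at which $f$ is G\^ateaux differentiable but not Hadamard differentiable, and split $P=P_1\cup P_2$, where $P_1$ consists of those $x\in P$ at which $f$ is \emph{not} Lipschitz and $P_2$ of those at which $f$ \emph{is} Lipschitz at $x$. It is clearly enough to show that each $P_i$ is $\sigma$-directionally porous, since a countable union of $\sigma$-directionally porous sets is again $\sigma$-directionally porous.

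For $P_2$ there is nothing to do beyond citing: $P_2$ is, by its very definition, contained in (indeed equal to) the set of points at which $f$ is Lipschitz and G\^ateaux differentiable but not Hadamard differentiable, so Theorem~\ref{lgaha} gives at once that $P_2$ is $\sigma$-directionally porous. Note that for this part neither the Baire property of $f$ nor the continuity of $f$ along lines is used.

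For $P_1$ I would observe that $P_1$ is contained in the set $B$ of Lemma~\ref{dkuhs}, namely the set of points at which $f$ is G\^ateaux differentiable but not Lipschitz; this inclusion is exactly where the hypotheses that $f$ has the Baire property and is continuous on every line $L$ with $G\cap L\neq\emptyset$ come in. Lemma~\ref{dkuhs} asserts that $B$ is $\sigma$-directionally porous. Since any subset of a directionally porous set is again directionally porous (the very same balls $B(x+t_nv,pt_n)$ that miss the larger set a fortiori miss the smaller one), every subset of a $\sigma$-directionally porous set is $\sigma$-directionally porous; hence $P_1$ is $\sigma$-directionally porous. Combining the two parts, $P=P_1\cup P_2$ is $\sigma$-directionally porous, which is the assertion.

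I do not expect any real obstacle here: all the substance has been pushed into the earlier results — Proposition~\ref{smerdh}/Theorem~\ref{lgaha} handle the ``Lipschitz $+$ G\^ateaux $\Rightarrow$ Hadamard (off a small set)'' step by an explicit directional-porosity construction around points where some $f'_{H+}(x,v)$ fails to exist, and Lemma~\ref{dkuh}/\ref{dkuhs} handle, via a Kuratowski--Ulam category argument, the fact that a line-continuous Baire map can fail to be Lipschitz only on a $\sigma$-directionally porous set even when it has abundant one-sided directional derivatives. The only point to verify, and it is entirely routine, is the trivial stability of ($\sigma$-)directional porosity under passing to subsets and under countable unions.
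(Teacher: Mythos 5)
Your proof is correct and is exactly the argument the paper intends: the result is stated there as an immediate consequence of Lemma~\ref{dkuhs} and Theorem~\ref{lgaha}, via precisely your decomposition into the Lipschitz and non-Lipschitz points, using the trivial stability of ($\sigma$-)directional porosity under subsets and countable unions.
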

 
 As already mentioned in Introduction, \cite[Corollary 3.9]{Shk} easily implies that if $f$ is everywhere G\^ ateaux differentiable and  has the Baire property, then
      $f$ is locally Lipschitz on a dense open set $U$, and so (see \eqref{lipgha}) $f$ is Hadamard differentiable at all points except a nowhere dense set (even if $X$ is nonseparable).
 
 This result together with Proposition \ref{prgh} give the following theorem.
          
   \begin{theorem}\label{vsha}
 Let $X$ be a separable Banach space, $Y$ a Banach space, $G \subset X$ an open set. Suppose that $f:G \to Y$ has the Baire property and is
   everywhere G\^ ateaux differentiable. Then $f$ is Hadamard differentiable at all points  of $G$ except a nowhere dense $\sigma$-directionally porous set.
   \end{theorem}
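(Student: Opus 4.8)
The plan is to let $N$ denote the set of points of $G$ at which $f$ fails to be Hadamard differentiable, and to show that $N$ is simultaneously $\sigma$-directionally porous and nowhere dense; the first property will come from Proposition \ref{prgh}, and the second from \cite[Corollary 3.9]{Shk} together with \eqref{lipgha}.

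First I would verify that the hypotheses of Proposition \ref{prgh} hold. The space $X$ is separable and $f$ has the Baire property by assumption. Moreover, since $f$ is G\^ateaux differentiable at every point of $G$, all directional derivatives $f'(x,v)$ exist, so the restriction of $f$ to $G\cap L$ is differentiable, hence continuous, for every line $L$ with $G\cap L\neq\emptyset$. Proposition \ref{prgh} then yields that the set of points of $G$ at which $f$ is G\^ateaux differentiable but not Hadamard differentiable is $\sigma$-directionally porous; since $f$ is G\^ateaux differentiable everywhere on $G$, that set is precisely $N$, so $N$ is $\sigma$-directionally porous.

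Next I would invoke \cite[Corollary 3.9]{Shk}, which (as recalled in the paragraph preceding the statement) provides a dense open set $U\subset G$ on which $f$ is locally Lipschitz. By \eqref{lipgha}, G\^ateaux differentiability forces Hadamard differentiability at each point of $U$, so $N\cap U=\emptyset$, i.e.\ $N\subset G\setminus U$. As $U$ is open and dense, $G\setminus U$ is closed with empty interior, hence nowhere dense, and therefore so is its subset $N$. Combining the two conclusions, $N$ is a nowhere dense $\sigma$-directionally porous set, which is the assertion of the theorem.

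All the needed ingredients are already established, so I do not expect a genuine obstacle; the only points deserving a word of care are the small observation that ``everywhere G\^ateaux differentiable'' supplies the line-continuity hypothesis of Proposition \ref{prgh}, and the bookkeeping that the two conclusions --- $\sigma$-directional porosity (a global statement proved for separable $X$) and nowhere density (obtained from the separate Baire-category argument, valid even without separability) --- are being recorded for one and the same set $N$, which is automatic here because G\^ateaux differentiability holds throughout $G$.
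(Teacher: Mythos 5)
Your proposal is correct and follows the paper's own route: the paper proves Theorem \ref{vsha} exactly by combining Proposition \ref{prgh} (giving $\sigma$-directional porosity of the exceptional set) with the consequence of \cite[Corollary 3.9]{Shk} and \eqref{lipgha} (giving nowhere density). Your explicit check that everywhere G\^ateaux differentiability yields continuity of $f$ along lines, which is needed to invoke Proposition \ref{prgh}, is a correct and welcome detail that the paper leaves implicit.
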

   
    \begin{remark}\label{neli}
     The assumptions of Theorem \ref{vsha} do not imply that $f$ is locally Lipschitz on the complement of a closed 
     $\sigma$-directionally porous set even in the case $X=Y=\R$.
     
     Indeed, let $(a_n,b_n),\ n\in \N$ be the system of bounded intervals such that $G:= \bigcup _{n \in \N} (a_n,b_n)$
      is dense in $\R$ and   $F:= \R \setminus  G$  has positive Lebesgue measure. We can clearly define a function $f$ on $\R$ such that
     \begin{enumerate}
     \item $f(x)=0$ for $x \in F$,
     \item  $f$ has continuous derivative on each $(a_n,b_n)$,
     \item $|f(x)| \leq (x-a_n)^2 (x-b_n)^2$ for $x \in (a_n,b_n)$,
     \item  for each $n\in \N$ and $\delta>0$, the derivative $f'$ is unbouded both on $(a_n, a_n + \delta)$ and on
      $(b_n-\delta, b_n)$.
      \end{enumerate}
     Then $f$ is everywhere differentiable, $G$ is the largest open set on which $f$ is locally Lipschitz and $F$ is not $\sigma$-directionally porous.  So Theorem \ref{vsha} cannot be proved simply using only the fact that G\^ ateaux and Hadamard differentiability coincide for locally Lipschitz functions.
      \end{remark}
   
   \begin{remark}\label{gadi}
   In Theorem \ref{vsha}, we cannot omit the assumption that $f$ has the Baire property. This follows from Lemma \ref{hnli} and Shkarin's \cite{Shk} example which shows that on each separable infinite-dimensional Banach space
    there exists an everywhere G\^ ateaux differentiable real function which is discontinuous at all points.
     \end{remark}
    
    However, if $X$ is finite-dimensional, the assumption that $f$ has the Baire property can be omitted. Indeed (as is also  noted without a reference in
     \cite[Remark 3.5]{Shk}), if $f$ is everywhere G\^ ateaux differentiable on $\R^n$, than $f$ has the Baire property. This follows, e.g., from the old 
       well-known fact (see \cite{Le}) that a partially continuous real function on $\R^n$ is in the $(n-1)^{st}$
      Baire class (whose proof works also for a Banach space valued function). So, using also \eqref{konhf} and \eqref{pdp}, we obtain the following result.
      
   \begin{theorem}\label{kondim}
   Let $G$ be an open subset of $\R^n$, $Y$ a Banach space,  and $f:G \to Y$ an
   everywhere G\^ ateaux differentiable  mapping. Then $f$ is Fr\' echet differentiable at all points  of $G$ except a nowhere dense $\sigma$-porous set.
   \end{theorem}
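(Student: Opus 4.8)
\section*{Proof proposal for Theorem~\ref{kondim}}

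The plan is to deduce Theorem~\ref{kondim} from Theorem~\ref{vsha} applied with $X=\R^n$; the only thing that requires an argument is that the ``Baire property'' hypothesis of Theorem~\ref{vsha} is automatic in the finite-dimensional case, after which \eqref{konhf} and \eqref{pdp} perform the translation for free.

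First I would observe that an everywhere G\^ateaux differentiable $f:G\to Y$ is separately continuous. Indeed, at each point $x\in G$ and for each coordinate vector $e_i$ the directional derivative $f'(x,e_i)$ exists, so the one real-variable map $t\mapsto f(x+te_i)$ is differentiable, hence continuous, at $0$; since $x$ ranges over the open set $G$, this makes the restriction of $f$ to every line parallel to a coordinate axis continuous (and, by the same reasoning with an arbitrary direction, $f$ is continuous on $G\cap L$ for every line $L$, which is what the intermediate statements leading to Theorem~\ref{vsha} require). By the classical theorem of Lebesgue (see \cite{Le}), a separately continuous function of $n$ real variables belongs to the $(n-1)^{\mathrm{st}}$ Baire class. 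The standard inductive proof — at each step one writes $f$ as a pointwise limit of functions obtained by ``freezing'' one variable along a rational step grid, each of which is of lower Baire class — uses only the linear structure of the range and the notion of a pointwise limit, so it carries over verbatim to $Y$-valued $f$. In particular $f$ is Borel measurable, hence has the Baire property.

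Now I would apply Theorem~\ref{vsha} with the separable space $X=\R^n$: there exists a nowhere dense $\sigma$-directionally porous set $A\subset G$ such that $f$ is Hadamard differentiable at every point of $G\setminus A$. By \eqref{pdp}, in the finite-dimensional space $\R^n$ directional porosity coincides with porosity, so $A$ is in fact a nowhere dense $\sigma$-porous set. Finally, by \eqref{konhf}, Hadamard differentiability in $\R^n$ is the same as Fr\'echet differentiability, whence $f$ is Fr\'echet differentiable at every point of $G\setminus A$. This is precisely the assertion.

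The only step that is not purely formal is the implication ``everywhere G\^ateaux differentiable $\Rightarrow$ Baire property''. Lebesgue's result is classical, but one should check that its usual proof is genuinely insensitive to the (possibly infinite-dimensional) range $Y$; it is, since separate continuity is a statement about one real variable at a time and the approximating step functions take values in the linear span of values already assumed by $f$. Beyond this bookkeeping I expect no obstacle: everything else is a mechanical substitution of the finite-dimensional facts \eqref{konhf} and \eqref{pdp} into Theorem~\ref{vsha}.
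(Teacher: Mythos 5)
Your proposal is correct and follows essentially the same route as the paper: the paper likewise notes that an everywhere G\^ateaux differentiable map on $\R^n$ is partially continuous, invokes Lebesgue's theorem \cite{Le} that such a function is of Baire class $(n-1)$ (remarking, as you do, that the proof works for Banach-space-valued maps), hence has the Baire property, and then applies Theorem \ref{vsha} together with \eqref{konhf} and \eqref{pdp}. No substantive difference.
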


   Using  Theorem \ref{lgaha} and the main result of \cite{Du} (which generalizes the result of \cite{PZ} and
    improves the result of \cite{Bo}), we easily obtain Theorem \ref{zldu} below. In its formulation, the system
     $\tilde{\mcA}$ of subsets of a separable Banach space is used. This system which was defined in \cite{PZ} is strictly smaller (see \cite[Proposition 13]{PZ}) than the well-known system of Aronszajn null sets (and also than the system of $\Gamma$-null sets, see \cite{Za}). We will not recall the (slightly technical) definition of the system  $\tilde{\mcA}$. Note only, that all members of  $\tilde{\mcA}$ are Borel by definition and that it is easy to see that $\tilde{\mcA}$ is stable with respect to countable unions.
     
\begin{theorem}\label{zldu}     
 Let $X$ be a separable Banach space, $Y$ a Banach space with the Radon-Nikod\' ym property, $G \subset X$ an open set, and $f:G \to Y$ a mapping.
     Then there exists a set $A \in \tilde{\mcA}$ such that if $x\in G \setminus A$ and $f$ is Lipschitz at $x$, then
      $f$ is Hadamard differentiable at $x$.
  \end{theorem}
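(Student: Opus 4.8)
\emph{Plan of proof.} The idea is to glue together two ingredients: the infinite-dimensional Stepanoff theorem for G\^ateaux differentiability proved in \cite{Du}, and Theorem \ref{lgaha}. Recall that the main result of \cite{Du} (which, as noted, generalizes \cite{PZ} and improves \cite{Bo}) provides, for any mapping $f:G\to Y$ into a Banach space $Y$ with the Radon--Nikod\'ym property, a set $A_1\in\tilde{\mcA}$ such that $f$ is G\^ateaux differentiable at every point $x\in G\setminus A_1$ at which $f$ is Lipschitz. On the other hand, Theorem \ref{lgaha} asserts that the set $P$ of all points $x\in G$ at which $f$ is both Lipschitz and G\^ateaux differentiable but fails to be Hadamard differentiable is $\sigma$-directionally porous.

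The next step is to replace $P$ by a set $A_2\in\tilde{\mcA}$ with $P\subset A_2$. For this I would first note that every $\sigma$-directionally porous subset of $X$ is contained in a Borel (indeed $F_\sigma$) $\sigma$-directionally porous set: this is the directional version of the familiar fact about $\sigma$-porous sets, obtained by decomposing a directionally porous set into countably many pieces on each of which the direction is chosen from a fixed countable dense subset of $S_X$ and the porosity constant and the relevant scale are bounded below, and then passing to closures --- closures preserve directional porosity up to halving the constant, since a point whose distance to a set $A$ is at least $\rho$ has distance at least $\rho/2$ to $\overline{A}$. Secondly, every Borel $\sigma$-directionally porous set belongs to $\tilde{\mcA}$ (see \cite{PZ}); since $\tilde{\mcA}$ is stable under countable unions, it is enough here to know that a single closed directionally porous set lies in $\tilde{\mcA}$, which is part of what is established in \cite{PZ}. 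This yields the desired $A_2$.

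Finally, put $A:=A_1\cup A_2$, which belongs to $\tilde{\mcA}$ by its stability under countable unions. If $x\in G\setminus A$ and $f$ is Lipschitz at $x$, then $x\notin A_1$ gives, by \cite{Du}, that $f$ is G\^ateaux differentiable at $x$; and then $x\notin A_2\supset P$ together with the definition of $P$ forces $f$ to be Hadamard differentiable at $x$. This proves the theorem.

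The step I expect to be most delicate is the second one: since members of $\tilde{\mcA}$ are Borel and $\tilde{\mcA}$ is strictly smaller than the $\sigma$-ideal of Aronszajn null sets, the easy remark that $\sigma$-directionally porous sets are Aronszajn null does not suffice, and one must verify the Borel-hull construction for directionally porous sets and invoke the precise definition of $\tilde{\mcA}$ from \cite{PZ} to conclude that such hulls land in $\tilde{\mcA}$. Everything else is a routine combination of the quoted results via the countable-union stability of $\tilde{\mcA}$.
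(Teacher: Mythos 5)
Your overall architecture is exactly the paper's: take $A_1\in\tilde{\mcA}$ from the main result of \cite{Du}, observe via Theorem \ref{lgaha} that the set $P$ of points where $f$ is Lipschitz and G\^ateaux but not Hadamard differentiable is $\sigma$-directionally porous, cover $P$ by some $A_2\in\tilde{\mcA}$, and set $A=A_1\cup A_2$; the paper does the covering step purely by citation of \cite[Proposition 14]{PZ} and \cite[Theorem 12]{PZ}, and the half of your citation ``Borel $\sigma$-directionally porous sets belong to $\tilde{\mcA}$'' is fine.

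The genuine gap is in the one step you chose to prove by hand: the claim that after fixing a nearby direction $v_n$ and a lower bound on the porosity constant, passing to closures preserves directional porosity. This is false. Your justification addresses the wrong point: an open ball disjoint from a set $A$ is automatically disjoint from $\overline{A}$ (no halving of the constant is even needed), so the holes at the \emph{old} points survive; the problem is porosity of $\overline{A}$ at the \emph{new} points of $\overline{A}\setminus A$, and there the argument breaks, because directional porosity only provides holes along some sequence of scales $t_n\to 0$, which may be incomparably smaller than the distance from the new point to the points of $A$ witnessing it (so no ``scale bounded below'' decomposition is available from the definition either). Concretely, let $C\subset\R$ be a fat Cantor set and let $A$ be the set of left endpoints of its complementary intervals: then $A$ is directionally porous with the single direction $v=1$ and single constant $p=1/2$ (for $0<t<2\ell/3$ the hole $B(x+t,t/2)$ lies in the adjacent gap of length $\ell$), yet $\overline{A}=C$ has positive measure, hence is not $\sigma$-(directionally) porous. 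The covering statement you need is nevertheless true and is exactly what the paper imports from \cite{PZ}; alternatively, one can repair your construction by replacing the closure of a piece $A_{n,k}$ with the Borel hull $H_{n,k}$ consisting of those $x\in\overline{A_{n,k}}$ at which $\overline{A_{n,k}}$ itself is porous in direction $v_n$ with constant $1/(2k)$: then $A_{n,k}\subset H_{n,k}$ (holes of $A_{n,k}$ are holes of $\overline{A_{n,k}}$), $H_{n,k}$ is Borel (use rational scales and the openness of $\{x:\dist(x+tv_n,\overline{A_{n,k}})>t/(2k)\}$, at the cost of a slight loss in the constant), and $H_{n,k}$ is directionally porous because $H_{n,k}\subset\overline{A_{n,k}}$, so the holes of $\overline{A_{n,k}}$ at its points are holes of $H_{n,k}$.
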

  \begin{proof}
  The main result of \cite{Du} says that  there exists a set $A_1 \in \tilde{\mcA}$ such that if $x\in G \setminus A_1$ and $f$ is Lipschitz at $x$, then
      $f$ is G\^ ateaux differentiable at $x$. Let $A_2^*$ be the set of all $x\in G$ at which $f$ is Lipschitz,
       G\^ ateaux differentiable, but not Hadamard differentiable. Then  $A_2^*$ is $\sigma$-directionally porous
       by  Theorem \ref{lgaha}. Using \cite[Proposition 14]{PZ} and \cite[Theorem 12]{PZ} we easily obtain that
        there exists $A_2 \in \tilde{\mcA}$ which contains $A_2^*$. Now it is  clearly sufficient to set $A: =A_1 \cup
         A_2$.
         \end{proof}
         
         As a corollary, we obtain an infinite dimensional analogue of Stepanoff theorem on Hadamard differentiability.
         
    \begin{corollary}\label{step}     
 Let $X$ be a separable Banach space, $Y$ a Banach space with the Radon-Nikod\' ym property, $G \subset X$ an open set, and $f:G \to Y$ a  pointwise Lipschitz mapping.
     Then 
      $f$ is Hadamard differentiable at all points of $G$ except a set belonging to $\tilde{\mcA}$.
  \end{corollary}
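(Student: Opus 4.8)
The plan is to obtain Corollary~\ref{step} as an immediate consequence of Theorem~\ref{zldu}, in complete analogy with the way Corollary~\ref{plgh} was deduced from Theorem~\ref{lgaha} and \cite[Corollary 3.9]{Shk}: once the ``Lipschitz at $x$'' hypothesis in Theorem~\ref{zldu} is automatic at every point, its conclusion becomes unconditional off the exceptional set.

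First I would recall that ``$f$ is pointwise Lipschitz on $G$'' means exactly that $f$ is Lipschitz at every $x \in G$, i.e.\ $\limsup_{y \to x} \|f(y)-f(x)\|/\|y-x\| < \infty$ for each $x \in G$. Next, I would apply Theorem~\ref{zldu} to the given $X$, $Y$ (which has the Radon--Nikod\'ym property), $G$ and $f$; this yields a set $A \in \tilde{\mcA}$ such that every $x \in G \setminus A$ at which $f$ is Lipschitz is a point of Hadamard differentiability of $f$. Finally, since $f$ is pointwise Lipschitz it is Lipschitz at each $x \in G$, in particular at each $x \in G \setminus A$; hence $f$ is Hadamard differentiable at every point of $G \setminus A$. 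Thus the set of points of $G$ at which $f$ is not Hadamard differentiable is contained in $A$, and taking $A$ itself as the exceptional set (it belongs to $\tilde{\mcA}$ by construction) completes the argument.

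I do not expect any real obstacle here: all the substantive work --- the infinite-dimensional Stepanoff-type theorem on G\^ateaux differentiability from \cite{Du} and the G\^ateaux-to-Hadamard passage of Theorem~\ref{lgaha} --- has already been absorbed into Theorem~\ref{zldu}. The only point requiring (trivial) care is the observation that pointwise Lipschitzness is precisely what is needed to discharge the conditional hypothesis of Theorem~\ref{zldu} at every point of $G \setminus A$; note also that we only need to exhibit \emph{some} member of $\tilde{\mcA}$ outside of which $f$ is Hadamard differentiable, so no closure property of $\tilde{\mcA}$ beyond its mere existence as produced by Theorem~\ref{zldu} is invoked.
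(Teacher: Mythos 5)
Your proposal is correct and is exactly the argument the paper intends: Corollary \ref{step} is an immediate consequence of Theorem \ref{zldu}, since pointwise Lipschitzness discharges the ``Lipschitz at $x$'' hypothesis at every point, so $f$ is Hadamard differentiable off the set $A \in \tilde{\mcA}$ furnished by that theorem. No further comment is needed.
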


   \begin{remark}\label{fdav}      
         
      Corollary \ref{step} and \eqref{konhf} imply the following result.
     \smallskip
   
   {\it Let  $Y$ a Banach space with the Radon-Nikod\' ym property, $G \subset \R^n$ an open set, and $f:G \to Y$ a pointwise Lipschitz mapping. Then 
      $f$ is  Fr\' echet differentiable at all points of $G$ except a set belonging to $\tilde{\mcA}$.}
      \smallskip
      
     However, it is not probably an improvement of the classical Stepanoff theorem, since
      from some unpublished results follows that each Borel Lebesgue null subset of $\R^n$ belongs to $\tilde{\mcA}$. 
       For $n=2$ this follows from    \cite[Theorem 7.5]{ACP} (via \cite[Theorem 12]{PZ}), which is stated without a proof, and  for $n\geq 3$ 
        from the corresponding theorem recently announced by  M. Cs\"ornyei and P. Jones.
   \end{remark}

 \end{document}